\newtheorem{defi}{Definition}[section]
\newtheorem{lem}{Lemma}[section]
\newtheorem{prop}{Proposition}[section]
\newtheorem{thm}{Theorem}[section]
\DeclareMathOperator*{\argmin}{arg\,min}
\title{\LARGE \bf
Risk-Aware Control of Discrete-Time Stochastic Systems: \\
 Integrating Kalman Filter and Worst-case CVaR \\ in Control Barrier Functions}
\author{Masako Kishida, \emph{Senior Member, IEEE} 
\thanks{*This work was supported by JST, PRESTO Grant Number JPMJPR22C3, Japan.}
\thanks{Masako Kishida is with the National Institute of Informatics,
        Tokyo 101-8430, Japan
        {\tt\small kishida@nii.ac.jp}}
}
\begin{document}
\maketitle
\thispagestyle{empty}
\pagestyle{empty}

\begin{abstract}
This paper proposes control approaches for discrete-time linear systems subject to stochastic disturbances. 
It employs Kalman filter to estimate the mean and covariance of the state propagation, and the worst-case conditional value-at-risk (CVaR) to quantify the tail risk using the estimated mean and covariance. 
The quantified risk is then integrated into a control barrier function (CBF) to derive constraints for controller synthesis, addressing tail risks near safe set boundaries. Two optimization-based control methods are presented using the obtained constraints for half-space and ellipsoidal safe sets, respectively. The effectiveness of the obtained results is demonstrated using numerical simulations. 
\end{abstract}

\section{Introduction}

Safety-critical systems such as autonomous vehicles, aerospace vehicles, and medical devices demand high reliability due to the severe consequences of their failure or malfunction, such as loss of life, significant property damage, or environmental harm. As automation becomes more prevalent in these applications, the need for risk-aware controller design is becoming increasingly essential.

In this context, Control Barrier Functions (CBFs)  \cite{WieA07} are now recognized for their pivotal role. 
CBFs provide conditions for control inputs to ensure system states remain within a given safe set, thereby guaranteeing the satisfaction of safety requirements in various fields \cite{AgrS17, AmeGT14, BreP22, SeoLB22}. 

Recent advancements in CBF approaches, such as those discussed  \cite{Cla21, WanMS21, CosCT23}, have incorporated external stochastic disturbances into their considerations. 
Moreover, some approaches consider risk through chance constraints \cite{LuoSK20}, or bounds on the probability of a collision \cite{YagMF21}. 
In particular, studies \cite{AhmXA22, Kis23c, SinAA23} provide ways of considering the safety by using the quantified tail risk to avoid severe consequences.  
Additionally, although many existing approaches assume that the all the system states can be observed, which may be impractical in some real applications, 
some \cite{Cla19, DasM22, WanX22, AgrP23} deal with the cases where not all the system states can be directly accessible. 
Yet, comprehensive solutions addressing both stochastic disturbances and unmeasured states remain limited  \cite{Cla19}, \cite{YagFY21}.

The objective of this paper is to introduce a risk-aware control approach tailored for discrete-time linear systems affected by stochastic disturbances, where not all system states are directly observable. The approach integrates Kalman filter \cite{Kal60,WelB95} with the worst-case conditional value-at-risk (CVaR) \cite{ZhuF09,ZymKR13-b} and synthesizes it with a CBF framework. This integration is highly synergistic: Kalman filter provides estimates of mean and covariance of the state propagation, essential parameters for the worst-case CVaR to quantify tail risk. This combination effectively enhances controller design using CBFs.

The paper is structured as follows: Section \ref{sec:prelim} introduces the notation, definitions and fundamental results. 
Sections \ref{sec:prep} - \ref{sec:syn} are the main part of this paper: After introducing building blocks in Section \ref{sec:prep},  sets of admissible inputs are characterized using Kalman filter along with the worst-case CVaR and the control barrier function in Section \ref{sec:char},  which is followed by the controller design in Section \ref{sec:syn}.
After numerical examples in Section \ref{sec:ex}, the paper is concluded with Section \ref{sec:conc}.

\section{Mathematical Preliminaries} \label{sec:prelim}
\subsection{Notation}
The sets of real numbers, real vectors of length $n$, real matrices of size $n \times m$, real symmetric matrices of size $n$, and positive definite matrices of size $n$ are denoted by $\mathbb{R}$, $\mathbb{R}^n$,  $\mathbb{R}^{n\times m}$, $\mathbb{S}^{n}$ and $\mathbb{S}^{n}_+$, respectively. 
For $M\in \mathbb{R}^{n\times n}$, $M \succ 0$ indicates $M$ is positive definite, $M \succcurlyeq 0$ indicates $M$ is positive semidefinite, and $\text{Tr}(M)$ denotes its the trace. 
For $M\in \mathbb{R}^{n\times m}$, $M^\top$ denotes its transpose.
For a vector $v\in \mathbb{R}^n$, $\|v\|$ denotes its Euclidean norm,  $v \geq 0$ its element-wise non-negativity, and $|v|$ the element-wise absolute value.

\subsection{Conditional Value-at-Risk}
To present risk quantification in our approach, we begin by showing the (worst-case) CVaR.

Consider a random vector $\xi \in \mathbb{R}^n$ under the true distribution $\mathbb{P}$, characterized by its mean $\mu \in \mathbb{R}^n$ and covariance matrix $\Sigma \in \mathbb{R}^{n \times n}$. The distribution $\mathbb{P}$, representing the probability law of $\xi$, is assumed to have finite second-order moments. We define $\mathcal{P}$ as the set of all probability distributions on $\mathbb{R}^n$ with identical first- and second-order moments to $\mathbb{P}$. Formally,
\begin{align*} 
 \mathcal{P}= \left\{\mathbb{P} : \mathbb{E}_{\mathbb{P}}\left[ \begin{bmatrix}\xi_i\\ 1\end{bmatrix}\begin{bmatrix}\xi_j\\ 1\end{bmatrix}^{\!\top} \right]
=  \begin{bmatrix}  \Sigma  \delta_{ij}  & \mu \\ \mu^\top & 1 \end{bmatrix}, \forall i, j\right\},
\end{align*}
where $\delta_{ij}$ is the Kronecker delta, and $\mathbb{E}_{\mathbb{P}}[ \cdot]$ the expected value under $\mathbb{P}$. Although the exact form of $\mathbb{P}$ is unknown, it is clear that $\mathbb{P} \in \mathcal{P}$.

\begin{defi}[Conditional Value-at-Risk (CVaR)  \cite{RocU00,ZymKR13-b}]
Given a measurable loss function $L : \mathbb{R}^n \rightarrow \mathbb{R}$, a probability distribution $\mathbb{P}$ on $\mathbb{R}^n$, and a level $\varepsilon \in (0, 1)$, the CVaR at level $\varepsilon$ under $\mathbb{P}$ is defined by:
\begin{align*}
\mathbb{P}\text{-CVaR}_{\varepsilon}[L({\xi})] = \inf_{\beta \in \mathbb{R}} \left\{ \beta + \frac{1}{\varepsilon}\mathbb{E}_{\mathbb{P}}[(L({\xi})-\beta)^+]\right\}.
\end{align*}
\end{defi}
CVaR is the conditional expectation of losses exceeding the ($1-\varepsilon$)-quantile of $L$, quantifying the tail risk of loss function \cite{ZymKR13-b}.

Having introduced the definition of CVaR, we now extend this concept to its worst-case scenario, key to the proposed approach.
The worst-case CVaR  is the supremum of CVaR over a given set of probability distributions as defined below: 
\begin{defi}[Worst-case CVaR \cite{ZhuF09, ZymKR13-b}]
The worst-case CVaR over a set of distributions $\mathcal{P}$ is:
\begin{align*}
\sup_{\mathbb{P}\in \mathcal{P}}\mathbb{P}\text{-CVaR}_{\varepsilon}[L({\xi})] = \inf_{\beta \in \mathbb{R}} \left\{ \beta + \frac{1}{\varepsilon}\sup_{\mathbb{P}\in \mathcal{P}}\mathbb{E}_{\mathbb{P}}[(L({\xi})-\beta)^+]\right\}.
\end{align*}
\end{defi}
The interchangeability of the supremum and infimum is validated by the stochastic saddle point theorem \cite{ShaK02}.

The widespread adoption of (worst-case) CVaR in risk quantification is largely attributed to its coherent properties.
\begin{prop}[Coherence properties \cite{ZhuF09,Art99}]\label{prop:coh}
The worst-case CVaR is a coherent risk measure, i.e., it satisfies the following properties:
Let $L_1 = L_1(\xi)$ and $L_2 = L_2(\xi)$ be two measurable loss functions, then the followings hold.
\begin{itemize}
\item Sub-additivity: For all $L_1$ and $L_2$,
\begin{align*}
&\sup_{\mathbb{P}\in \mathcal{P}}\mathbb{P}\text{-CVaR}_{\varepsilon}[L_1+L_2]\\
 & \leq \sup_{\mathbb{P}\in \mathcal{P}}\mathbb{P}\text{-CVaR}_{\varepsilon}[L_1] +
\sup_{\mathbb{P}\in \mathcal{P}}\mathbb{P}\text{-CVaR}_{\varepsilon}[L_2]; 
\end{align*}
\item Positive homogeneity: For a positive constant $c_1>0$, 
\begin{align*}
\sup_{\mathbb{P}\in \mathcal{P}}\mathbb{P}\text{-CVaR}_{\varepsilon}[c_1L_1] =c_1 \sup_{\mathbb{P}\in \mathcal{P}}\mathbb{P}\text{-CVaR}_{\varepsilon}[L_1]; 
\end{align*}
\item Monotonicity: If $L_1\leq L_2$ almost surely, 
\begin{align*}
\sup_{\mathbb{P}\in \mathcal{P}}\mathbb{P}\text{-CVaR}_{\varepsilon}[L_1] \leq
\sup_{\mathbb{P}\in \mathcal{P}}\mathbb{P}\text{-CVaR}_{\varepsilon}[L_2];
\end{align*}
\item Translation invariance: For a constant $c_2$,
\begin{align*}
\sup_{\mathbb{P}\in \mathcal{P}}\mathbb{P}\text{-CVaR}_{\varepsilon}[L_1+c_2] =\sup_{\mathbb{P}\in \mathcal{P}}\mathbb{P}\text{-CVaR}_{\varepsilon}[L_1] +
c_2.
\end{align*}
\end{itemize}
\end{prop}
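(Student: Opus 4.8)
The plan is to verify each of the four properties directly from the definition of the worst-case CVaR, exploiting the structure of the outer infimum over $\beta$. Throughout, I would abbreviate $R(L) := \sup_{\mathbb{P}\in\mathcal{P}}\mathbb{P}\text{-CVaR}_{\varepsilon}[L]$ and write the inner objective as $g_L(\beta) := \beta + \frac{1}{\varepsilon}\sup_{\mathbb{P}\in\mathcal{P}}\mathbb{E}_{\mathbb{P}}[(L-\beta)^+]$, so that $R(L) = \inf_{\beta}g_L(\beta)$. The two homogeneity-type identities reduce to a reindexing of this infimum, monotonicity to a pointwise comparison, and only sub-additivity requires a genuine inequality argument.

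I would dispatch \textbf{positive homogeneity} and \textbf{translation invariance} first by a change of variables. For positive homogeneity with $c_1>0$, using $(c_1 x)^+ = c_1 x^+$ and substituting $\beta = c_1\gamma$ pulls the factor $c_1$ out of every term, giving $g_{c_1 L_1}(c_1\gamma) = c_1 g_{L_1}(\gamma)$; since $\beta \mapsto \beta/c_1$ is a bijection, taking the infimum yields $R(c_1 L_1) = c_1 R(L_1)$. For translation invariance, substituting $\beta = \gamma + c_2$ converts $(L_1 + c_2 - \beta)^+$ into $(L_1 - \gamma)^+$ while contributing an additive $c_2$ to the leading term, so $R(L_1 + c_2) = R(L_1) + c_2$. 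Neither step needs anything beyond elementary properties of $(\cdot)^+$. For \textbf{monotonicity}, if $L_1 \leq L_2$ almost surely then $(L_1 - \beta)^+ \leq (L_2 - \beta)^+$ for every $\beta$, and both $\mathbb{E}_{\mathbb{P}}[\cdot]$ and the supremum over $\mathcal{P}$ preserve this inequality; hence $g_{L_1}(\beta) \leq g_{L_2}(\beta)$ for all $\beta$, and taking the infimum on each side gives $R(L_1) \leq R(L_2)$.

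\textbf{Sub-additivity} is the only property requiring more than a substitution, and it is where I expect the main work to lie. The key observation is the pointwise inequality $(a+b)^+ \leq a^+ + b^+$, which gives, for any $\beta_1,\beta_2\in\mathbb{R}$,
\begin{align*}
(L_1 + L_2 - \beta_1 - \beta_2)^+ \leq (L_1 - \beta_1)^+ + (L_2 - \beta_2)^+.
\end{align*}
Taking $\mathbb{E}_{\mathbb{P}}$ and then the supremum over $\mathcal{P}$, I would invoke the elementary fact that the supremum of a sum is at most the sum of the suprema to separate the two terms. Setting $\beta = \beta_1 + \beta_2$ in $g_{L_1 + L_2}$ then yields $g_{L_1 + L_2}(\beta_1 + \beta_2) \leq g_{L_1}(\beta_1) + g_{L_2}(\beta_2)$, and minimizing the right-hand side over $\beta_1$ and $\beta_2$ independently establishes $R(L_1 + L_2) \leq R(L_1) + R(L_2)$. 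The one subtlety to flag is that the separation $\sup_{\mathbb{P}}(f+g) \leq \sup_{\mathbb{P}} f + \sup_{\mathbb{P}} g$ is in general a strict relaxation, since a single worst-case distribution need not simultaneously maximize both terms; but because sub-additivity is itself an inequality, this relaxation is precisely what is required and introduces no gap in the argument.
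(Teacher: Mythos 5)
Your proof is correct, but it is worth noting that the paper itself does not prove Proposition~\ref{prop:coh} at all: it is stated as a known result and deferred entirely to the cited literature \cite{ZhuF09,Art99}, where coherence is typically established within the general theory of coherent risk measures (e.g., via dual representations of worst-case CVaR). What you have done instead is verify all four axioms directly from the variational (Rockafellar--Uryasev style) formula $R(L)=\inf_{\beta}g_L(\beta)$ that the paper adopts as its definition of worst-case CVaR after the saddle-point interchange. This buys self-containedness and uses nothing beyond elementary properties of $(\cdot)^+$ and the algebra of infima/suprema: homogeneity and translation invariance by reindexing the infimum, monotonicity by pointwise comparison, and sub-additivity via $(a+b)^+\leq a^+ + b^+$ together with splitting $\beta=\beta_1+\beta_2$ and $\sup_{\mathbb{P}}(f+g)\leq\sup_{\mathbb{P}}f+\sup_{\mathbb{P}}g$ --- all of which are the standard arguments and are carried out correctly, including your accurate remark that the last relaxation points in the harmless direction. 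One small caveat you should make explicit in the monotonicity step: since the risk measure involves a supremum over all $\mathbb{P}\in\mathcal{P}$, the hypothesis $L_1\leq L_2$ ``almost surely'' must be read as holding pointwise in $\xi$ (or almost surely under every $\mathbb{P}\in\mathcal{P}$), not merely under the single true distribution; otherwise the expectation comparison $\mathbb{E}_{\mathbb{P}'}[(L_1-\beta)^+]\leq\mathbb{E}_{\mathbb{P}'}[(L_2-\beta)^+]$ could fail for some other $\mathbb{P}'$ in the ambiguity set. With that reading, which is the standard one in the distributionally robust setting, your argument is complete.
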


The usefulness of the worst-case CVaR in risk quantification further appears in its computational efficiency for a special, but common, case. When the loss function $L(\xi)$ is quadratic in $\xi$, the worst-case CVaR can be computed via a semidefinite program. Let the second-order moment matrix of $\xi$ 
\begin{align*}
\Omega &= \begin{bmatrix}\Sigma + \mu \mu^\top & \mu \\ \mu^\top & 1 \end{bmatrix}. 
\end{align*}
\begin{lem}[Quadratic Loss Function \cite{ZymKR13-b}, \cite{ZymKR13}] \label{lem:CVaR_quadratic}
For $L({\xi}) = \xi^\top P \xi + 2q^\top \xi + r$, where $P \in \mathbb{S}^n$, $q\in \mathbb{R}^n$, and $r\in \mathbb{R}$, the worst-case CVaR is given by:
 \begin{align*}
\sup_{\mathbb{P}\in \mathcal{P}}\mathbb{P}\text{-CVaR}_{\varepsilon}[L({\xi})] =
&\inf_{ \beta} \left\{\beta + \frac{1}{\varepsilon}\text{Tr}(\Omega N): \right.\\
& N \succcurlyeq 0, \
\left. N-\left[\begin{array}{cc}P &q \\ q^\top & r-\beta \end{array}\right] \succcurlyeq 0\right\}.
\end{align*}
\end{lem}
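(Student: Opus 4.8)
The plan is to collapse the inner worst-case expectation in the definition of worst-case CVaR into a semidefinite program by moment-problem duality, and then absorb the resulting infimum into the outer infimum over $\beta$. Since
\begin{align*}
\sup_{\mathbb{P}\in\mathcal{P}}\mathbb{P}\text{-CVaR}_{\varepsilon}[L(\xi)]
= \inf_{\beta}\left\{\beta + \frac{1}{\varepsilon}\sup_{\mathbb{P}\in\mathcal{P}}\mathbb{E}_{\mathbb{P}}\left[(L(\xi)-\beta)^+\right]\right\},
\end{align*}
it suffices to characterize the inner supremum for each fixed $\beta$. I would pass to homogeneous coordinates: writing $\hat{\xi} = [\xi^\top,\,1]^\top$, the shifted loss is the quadratic form
\begin{align*}
L(\xi)-\beta = \hat{\xi}^\top M \hat{\xi}, \qquad M = \begin{bmatrix} P & q \\ q^\top & r-\beta \end{bmatrix},
\end{align*}
and the moment constraints defining $\mathcal{P}$ become exactly $\mathbb{E}_{\mathbb{P}}[\hat{\xi}\hat{\xi}^\top] = \Omega$. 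Thus the inner supremum is the generalized moment problem $\sup_{\mathbb{P}\in\mathcal{P}}\mathbb{E}_{\mathbb{P}}[\max\{\hat{\xi}^\top M \hat{\xi},\,0\}]$ over all probability measures with prescribed second-moment matrix $\Omega$.

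Next I would dualize this moment problem. The guiding idea is to dominate the integrand pointwise by a single quadratic form $\hat{\xi}^\top N \hat{\xi}$. Because the integrand is the maximum of the two terms $\hat{\xi}^\top M\hat{\xi}$ and $0$, the requirement $\hat{\xi}^\top N \hat{\xi} \geq \max\{\hat{\xi}^\top M \hat{\xi},\,0\}$ for all $\xi$ separates into the two linear matrix inequalities $N \succcurlyeq 0$ and $N - M \succcurlyeq 0$. For any such $N$, taking expectations and using $\mathbb{E}_{\mathbb{P}}[\hat{\xi}\hat{\xi}^\top]=\Omega$ yields $\mathbb{E}_{\mathbb{P}}[\max\{\hat{\xi}^\top M \hat{\xi},\,0\}] \leq \text{Tr}(\Omega N)$, so weak duality gives the upper bound $\inf\{\text{Tr}(\Omega N): N \succcurlyeq 0,\ N - M \succcurlyeq 0\}$.

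The main obstacle is showing this bound is attained, i.e., that strong duality holds with no gap between the moment problem and its semidefinite dual. This I would settle by invoking the conic-duality result for such moment problems \cite{ZymKR13-b,ZymKR13}, after verifying the accompanying Slater-type strict-feasibility condition; the latter holds because $\Omega \succ 0$ whenever the covariance $\Sigma \succ 0$ (by a Schur-complement argument on the block form of $\Omega$), providing a moment matrix in the interior of the positive semidefinite cone. Finally, substituting the dual optimal value back into the outer problem and noting that the minimizations over $\beta$ and over $N$ are both infima, I would merge them into the single joint infimum over $(\beta, N)$ stated in the lemma.
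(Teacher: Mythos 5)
The paper offers no proof of this lemma at all --- it is imported verbatim from the cited works of Zymler, Kuhn and Rustem --- and your argument is precisely the proof given in those references: homogenize so that $L(\xi)-\beta=\hat{\xi}^\top M\hat{\xi}$ and the moment constraints read $\mathbb{E}_{\mathbb{P}}[\hat{\xi}\hat{\xi}^\top]=\Omega$, obtain weak duality by dominating the integrand $(\hat{\xi}^\top M\hat{\xi})^+$ with a quadratic form $\hat{\xi}^\top N\hat{\xi}$ under the constraints $N\succcurlyeq 0$ and $N-M\succcurlyeq 0$, close the duality gap via conic moment duality under the Slater-type condition $\Omega\succ 0$, and finally merge the infima over $\beta$ and $N$. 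Your reconstruction is correct and matches that route; the only point worth recording is that the strong-duality step means the identity implicitly requires $\Sigma\succ 0$ (equivalently $\Omega\succ 0$) --- an assumption the lemma as displayed does not state, but which is satisfied where the paper applies it, since the relevant covariance blocks $P_{t|t}$ and $Q$ are positive definite.
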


Similarly to the absolute value of a vector, we denote the element-wise worst-case CVaR by
\begin{align*}
\sup_{\mathbb{P}\in \mathcal{P}}\mathbb{P}\text{-CVaR}_{\varepsilon}[ v] 
=&\left[\begin{matrix}
\sup_{\mathbb{P}\in \mathcal{P}}\mathbb{P}\text{-CVaR}_{\varepsilon}[e_1^\top v] \\
\vdots\\
\sup_{\mathbb{P}\in \mathcal{P}}\mathbb{P}\text{-CVaR}_{\varepsilon}[e_n^\top v] 
\end{matrix}\right],
\end{align*}
where $e_i$ is the $i$th column of the identity matrix of size $n$.

Building on Lemma \ref{lem:CVaR_quadratic}, we now present a lemma that provides a bound on linear loss functions.
\begin{lem}[A bound on linear $L(\xi)$\cite{Kis23c}] \label{lem:CVaR_bd}
Suppose $\mu=0$. Then, for $q\in \mathbb{R}^n$, 
\begin{align}
\sup_{\mathbb{P}\in \mathcal{P}}\mathbb{P}\text{-CVaR}_{\varepsilon}[ q^\top \xi] 
\leq  |q|^\top \sup_{\mathbb{P}\in \mathcal{P}}\mathbb{P}\text{-CVaR}_{\varepsilon}[  \xi].
\end{align}
\end{lem}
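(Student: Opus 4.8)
The plan is to reduce the vector inequality to scalar coordinate-wise worst-case CVaRs by combining the coherence properties of Proposition~\ref{prop:coh} with a reflection symmetry of $\mathcal{P}$ that the hypothesis $\mu=0$ makes available. First I would decompose the linear loss along coordinates as $q^\top\xi=\sum_{i=1}^n q_i\,e_i^\top\xi$ and apply sub-additivity term by term to get
\[
\sup_{\mathbb{P}\in\mathcal{P}}\mathbb{P}\text{-CVaR}_{\varepsilon}[q^\top\xi]
\le \sum_{i=1}^n \sup_{\mathbb{P}\in\mathcal{P}}\mathbb{P}\text{-CVaR}_{\varepsilon}[q_i\,e_i^\top\xi].
\]

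Next, for each coordinate I would extract $|q_i|$. Since positive homogeneity in Proposition~\ref{prop:coh} requires a positive scalar, I would separate the sign and write $q_i\,e_i^\top\xi=|q_i|\,\big(\operatorname{sgn}(q_i)\,e_i^\top\xi\big)$, so that
\[
\sup_{\mathbb{P}\in\mathcal{P}}\mathbb{P}\text{-CVaR}_{\varepsilon}[q_i\,e_i^\top\xi]
=|q_i|\,\sup_{\mathbb{P}\in\mathcal{P}}\mathbb{P}\text{-CVaR}_{\varepsilon}[\operatorname{sgn}(q_i)\,e_i^\top\xi],
\]
the $q_i=0$ terms vanishing on both sides. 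The crux is then to remove the sign by showing
\[
\sup_{\mathbb{P}\in\mathcal{P}}\mathbb{P}\text{-CVaR}_{\varepsilon}[-e_i^\top\xi]
=\sup_{\mathbb{P}\in\mathcal{P}}\mathbb{P}\text{-CVaR}_{\varepsilon}[e_i^\top\xi].
\]
This is exactly where $\mu=0$ enters: I would make the change of variable $\eta=-\xi$ and observe that the moment constraints defining $\mathcal{P}$ are invariant under negation when $\mu=0$ (zero mean is preserved, and the covariance of $-\xi$ equals that of $\xi$), so the map sending $\mathbb{P}$ to the law $\mathbb{P}'$ of $-\xi$ is a bijection of $\mathcal{P}$ onto itself. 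Because $\mathbb{P}'\text{-CVaR}_{\varepsilon}[e_i^\top\eta]=\mathbb{P}\text{-CVaR}_{\varepsilon}[-e_i^\top\xi]$, taking suprema over the identical sets yields the claimed equality. Substituting back gives $\sup_{\mathbb{P}\in\mathcal{P}}\mathbb{P}\text{-CVaR}_{\varepsilon}[q_i\,e_i^\top\xi]=|q_i|\,\sup_{\mathbb{P}\in\mathcal{P}}\mathbb{P}\text{-CVaR}_{\varepsilon}[e_i^\top\xi]$ for each $i$, and summing recovers the right-hand side $|q|^\top\sup_{\mathbb{P}\in\mathcal{P}}\mathbb{P}\text{-CVaR}_{\varepsilon}[\xi]$ by the definition of the element-wise worst-case CVaR.

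I expect the main obstacle to be justifying the reflection step cleanly, namely confirming that negation maps $\mathcal{P}$ \emph{exactly} onto itself and that the worst-case CVaR of $-e_i^\top\xi$ coincides with that of $e_i^\top\xi$ under the reflected law; both facts rest squarely on $\mu=0$ and would fail in general otherwise. Once this reflection invariance is established, the coordinate decomposition, sub-additivity, and positive-homogeneity steps are routine applications of Proposition~\ref{prop:coh}.
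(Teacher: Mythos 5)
Your proof is correct. Be aware, though, that this paper never proves Lemma~\ref{lem:CVaR_bd} itself: it is imported from \cite{Kis23c}, so there is no in-paper argument to match; the surrounding text (``Building on Lemma~\ref{lem:CVaR_quadratic}\dots'') indicates the intended derivation is computational, namely specializing Lemma~\ref{lem:CVaR_quadratic} to an affine loss ($P=0$), which for $\mu=0$ gives the closed form $\sup_{\mathbb{P}\in\mathcal{P}}\mathbb{P}\text{-CVaR}_{\varepsilon}[q^\top\xi]=\sqrt{(1-\varepsilon)/\varepsilon}\,\sqrt{q^\top\Sigma q}$, reducing the claim to the elementary covariance inequality $\sqrt{q^\top\Sigma q}\le\sum_i |q_i|\sqrt{\Sigma_{ii}}$, itself a consequence of $|\Sigma_{ij}|\le\sqrt{\Sigma_{ii}\Sigma_{jj}}$. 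Your route is genuinely different and arguably more robust: you use only the coherence axioms of Proposition~\ref{prop:coh} (sub-additivity across the coordinate decomposition, positive homogeneity to pull out $|q_i|$) together with law invariance of CVaR and the fact that the pushforward of $\mathbb{P}$ under $\xi\mapsto-\xi$ is an involution of $\mathcal{P}$ when $\mu=0$; your reflection step is sound, since negation preserves zero mean and all second moments, so $\sup_{\mathbb{P}\in\mathcal{P}}\mathbb{P}\text{-CVaR}_{\varepsilon}[-e_i^\top\xi]=\sup_{\mathbb{P}\in\mathcal{P}}\mathbb{P}\text{-CVaR}_{\varepsilon}[e_i^\top\xi]$. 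What each approach buys: the closed-form route is shorter given Lemma~\ref{lem:CVaR_quadratic} and makes the slack in the bound explicit in terms of $\Sigma$; your axiomatic route avoids any formula and would transfer verbatim to any law-invariant coherent risk measure over a negation-symmetric ambiguity set. Two minor points to tighten in a final write-up: state explicitly that $\sup_{\mathbb{P}\in\mathcal{P}}\mathbb{P}\text{-CVaR}_{\varepsilon}[0]=0$ so the $q_i=0$ terms can be dropped (immediate from the definition, or from positive homogeneity plus finiteness), and note that CVaR depends on the loss only through its law under $\mathbb{P}$, which is what licenses identifying $\mathbb{P}\text{-CVaR}_{\varepsilon}[-e_i^\top\xi]$ with $\mathbb{P}'\text{-CVaR}_{\varepsilon}[e_i^\top\eta]$.
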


\subsection{Control Barrier Function}
Having discussed the risk quantification, we now turn our attention to CBF, an approach to ensure the system safety.
Here, we review the basic CBF with no disturbance.

Define the safe set $\mathcal{C}$ as the superlevel set
of a continuously differentiable function $h : \mathbb{R}^n \rightarrow \mathbb{R}$,
\begin{align}\label{eq:safeset}
\mathcal{C} = \{x \in  \mathbb{R}^n: h(x)\geq 0\}.
\end{align}

With $\mathcal{C}$, the CBF is defined below.
\begin{defi}[Control Barrier Function (CBF) \cite{ZenZS21}]\label{defi:CBF}
The function $h$ is a discrete-time CBF for
\begin{align}
x_{t+1} =f(x_t, u_t)
\end{align}
 on $\mathcal{C}$ if there exists an $\alpha \in [0,1)$ such that for all $x_t \in  \mathcal{C}$,
there exists a $u_t \in  \mathbb{R}^m$ such that:
\begin{align}
 h(x_{t+1} )\geq \alpha h(x_t) .
\end{align}
\end{defi}
The existence of a CBF guarantees that the control system is safe assuming that the initial state is in $\mathcal{C}$ \cite{AmeXG17}.
We will define a risk-aware CBF at the end of the next section.

\section{Preparation: Integrating Kalman Filter and Worst-case CVaR
in CBF} \label{sec:prep}
This is the first section of the three main sections of this paper. 
Here, we set up the foundational elements; system model,  state estimation and CBF, for the characterization of the admissible control input and controller synthesis that follow.

\subsection{System Model}
This paper deals with the discrete-time linear stochastic system described by:
\begin{align}\begin{aligned}
x_{t+1} &=Ax_t+B u_t+w_t, \\ 
z_t &= Hx_t + v_t, \label{eq:sys}
\end{aligned}\end{align}
where $x_t \in \mathbb{R}^{n}$ is the state, $u_t \in \mathbb{R}^{m}$ is the control input, $w_t \in \mathbb{R}^{n_w}$ is the disturbance,
$z_t \in \mathbb{R}^{n_y}$ is the measurement, and $v_t \in \mathbb{R}^{n_v}$ is the noise, 
 respectively, at discrete time instant $t \in \mathbb{Z}_{\geq 0}$. 
The matrices $A \in \mathbb{R}^{n\times n}$,  $B \in \mathbb{R}^{n\times m}$, $H \in \mathbb{R}^{n\times n_y}$ are assumed to be constants.  

It is assumed that 
\begin{itemize}
\item the initial state $x_0$ is a random vector with $\mathbb{E}[x_0]=\bar{x}_{0|0}$ and covariance $\mathbb{E}[(x_0-\bar{x}_0)(x_0-\bar{x}_0)^\top]=P_{0|0}$,
\item the disturbance $w_t$ are independent and identically distributed random vectors with 
$\mathbb{E}[w_t]=0$ and finite covariance $\mathbb{E}[w_tw_t^\top]=Q$,
\item the noise $v_t$ are independent and identically distributed random vectors with 
$\mathbb{E}[v_t]=0$ and finite covariance $\mathbb{E}[v_tv_t^\top]=R$, and
\item the initial state and the noise vectors at each step $\{x_0, w_1, ..., w_t, v_1, ... ,v_t\}$ are all mutually independent.
\end{itemize}

\subsection{Kalman Filter}\label{sec:filter}
We now introduce Kalman filter to estimate the mean and covariance of the state propagation. 

The Kalman filter, a recursive estimator, optimal for linear systems with Gaussian noise, is adapted here for disturbances and noise, which are not necessarily Gaussians.
It operates in two phases: prediction and update. In the prediction phase, the filter forecasts the next state and its uncertainty. Subsequently, during the update phase, it refines these predictions based on new measurements.

We employ standard Kalman filter notation for system \eqref{eq:sys}:
\begin{itemize}
\item \textbf{Prediction:}
  \begin{itemize}
  \item Predicted state mean at $t$: $\bar{x}_{t+1|t} = A\bar{x}_{t|t}+B u_t$
  \item Predicted error covariance: ${P}_{t+1|t} = AP_{t|t}A^\top + Q$
  \end{itemize}
\item \textbf{Update (with gain $K_{t+1}$):}
  \begin{itemize}
  \item Measurement error covariance: $S_{t+1} = HP_{t+1|t}H^\top + R$ (this is used in Section \ref{sec:Kgain})
  \item Updated state mean: $\bar{x}_{t+1|t+1} = \bar{x}_{t+1|t}+K_{t+1}(z_{t+1}-H\bar{x}_{t+1|t})$
  \item Updated error covariance: ${P}_{t+1|t+1} = (I - K_{t+1}H)P_{t+1|t}$
  \end{itemize}
\end{itemize}
How to choose the gain $K_{t+1}$ is discussed later in Section \ref{sec:Kgain}. 

We also let:
\begin{itemize}
\item $\hat{x}_{t|t}$ be the random vector with mean $\bar{x}_{t|t}$ and covariance ${P}_{t|t}$, 
\end{itemize}
and 
\begin{align}
\hat{x}_{t+1|t}  = A\hat{x}_{t|t} +Bu_t + w_t. \label{eq:x_hat}
\end{align}
be the random vector that propagates the state $\hat{x}_{t|t}$ to the next time step according to \eqref{eq:sys}.
This is crucial for evaluating the condition for the control input in the next subsection.

\subsection{Risk-Aware CBF}
This subsection integrates Kalman filter and worst-case CVaR to introduce Kalman filter-based risk-aware discrete-time CBF. This function is key for defining admissible control inputs in the subsequent section.

Consider a random vector $\xi_{t|t}$ and its shifted version $\xi_{t|t}^c$, defined as:
\begin{align}
\xi_{t|t} = \left[\begin{matrix}\hat{x}_{t|t}^\top& w_t^\top \end{matrix} \right]^{\top}, \
\xi_{t|t}^c = \xi_{t|t} -\left[\begin{matrix}\bar{x}_{t|t}^\top & 0 \end{matrix} \right]^{\top}.
\end{align}
Define
\begin{align}
\mathcal{P}_{t|t}^c= \left\{\mathbb{P} : \mathbb{E}_{\mathbb{P}}\left[ \begin{bmatrix}\xi_i\\ 1\end{bmatrix}\begin{bmatrix}\xi_j\\ 1\end{bmatrix}^{\!\top} \right]
=  \begin{bmatrix} \Sigma_{\xi_{t|t}^c}  \delta_{ij}  & 0 \\ 0^\top & 1 \end{bmatrix}, \forall i, j\right\},
\end{align}
where $\Sigma_{\xi_{t|t}^c}=\left[\begin{array}{cc}{P}_{t|t}  & 0 \\ 0 & Q \end{array} \right]$.
Then, the measure $\mathbb{P}$ of $\xi_{t|t}^c$ is $\mathbb{P}\in \mathcal{P}_{t|t}^c$.

With $\mathcal{C}$ in \eqref{eq:safeset}, we define Kalman filter-based risk-aware discrete-time CBF as below.
\begin{defi}[Risk-Aware Control Barrier Function]\label{defi:CBF2}
A function $h$ is a Kalman filter-based risk-aware discrete-time CBF for system \eqref{eq:sys} on $\mathcal{C}$ if there exists an $\alpha \in [0,1)$ such that for all $\hat{x}_{t|t}$ that satisfies $\sup_{\mathbb{P}\in \mathcal{P}_{t|t}^c}\mathbb{P}\text{-CVaR}_{\varepsilon}[-  h(\hat{x}_{t|t})]\leq 0$, there exists a $u_t \in \mathbb{R}^m$ such that:
\begin{align} \label{eq:or_cbf}
\sup_{\mathbb{P}\in \mathcal{P}_{t|t}^c}\mathbb{P}\text{-CVaR}_{\varepsilon}[- h(\hat{x}_{t+1|t})+\alpha h(\hat{x}_{t|t})]\leq 0.
\end{align}
Here, $\hat{x}_{t|t}$ and $\hat{x}_{t+1|t}$ are computed by Kalman filter.
\end{defi}

As we have integrated Kalman filter into CBF, Section \ref{sec:char} will proceed to characterize sets of admissible control inputs using this, paving the way for controller synthesis.


\section{Charactering the Set of Admissible Control Input Using Kalman Filter} \label{sec:char}
This is the second section of the three main sections. 
Here, we consider characterizing the set of admissible control input by quantifying the left-hand-side of \eqref{eq:or_cbf}
to determine the control input $u_t$ at time $t$ in the next section. Here, we still assume that the gain $K_t$ is given. 
\subsection{Half-space safe set}\label{sec:hs}
We start with a half-space safe set scenario, where the set is defined by an affine function $h(x)$:
\begin{align}\begin{aligned} \label{eq:hs-set}
\mathcal{C}_{\text{hs}} &= \{x \in  \mathbb{R}^n: h(x)= q^\top x+r  \geq 0\}.
\end{aligned}\end{align}
This scenario allows us to reformulate the constraint \eqref{eq:or_cbf} as a linear function of the control input $u$:
\begin{thm}\label{thm:hs}
If $h(x) = q^\top x+r$, $q\in\mathbb{R}^n$ and $r\in\mathbb{R}$, then the constraint \eqref{eq:or_cbf} holds if and only if
\begin{align}
-q^\top Bu_t &\leq \phi(\hat{x}_{t|t}), \label{eq:hs}
\end{align}
where
\begin{align}\begin{aligned}
 \phi(\hat{x}_{t|t})& =-\sup_{\mathbb{P}\in \mathcal{P}_{t|t}^c}\mathbb{P}\text{-CVaR}_{\varepsilon}\left[-{q}^\top  \left[\begin{matrix}A-\alpha I& I \end{matrix}\right] \xi_{t|t}^c\right]  \\
 &\qquad +q^\top (A-\alpha I)\bar{x}_{t|t}+(1-\alpha)r.
\end{aligned}\end{align}
\end{thm}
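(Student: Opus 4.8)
The plan is to reduce the worst-case CVaR constraint \eqref{eq:or_cbf} to the scalar inequality \eqref{eq:hs} by expanding the loss function and exploiting the translation invariance of the worst-case CVaR from Proposition \ref{prop:coh}. Since the affine structure makes every manipulation an equivalence rather than a one-sided bound, the \emph{if and only if} will fall out automatically; the only real work is careful bookkeeping.

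First I would substitute the propagation model \eqref{eq:x_hat}, $\hat{x}_{t+1|t} = A\hat{x}_{t|t} + Bu_t + w_t$, together with the affine form $h(x) = q^\top x + r$, into the argument of the CVaR. A short computation collects the two random terms $\hat{x}_{t|t}$ and $w_t$ under the single stacked matrix using $\xi_{t|t} = [\hat{x}_{t|t}^\top\ w_t^\top]^\top$, giving
\begin{align*}
-h(\hat{x}_{t+1|t}) + \alpha h(\hat{x}_{t|t}) = -q^\top\left[\begin{matrix}A-\alpha I & I\end{matrix}\right]\xi_{t|t} - q^\top Bu_t - (1-\alpha)r.
\end{align*}

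Next I would center the random vector. Writing $\xi_{t|t} = \xi_{t|t}^c + [\bar{x}_{t|t}^\top\ 0]^\top$ and noting that the appended $I$ block multiplies the zero component, the mean contributes only the deterministic term $q^\top(A-\alpha I)\bar{x}_{t|t}$. The loss then reads $L = -q^\top\left[\begin{matrix}A-\alpha I & I\end{matrix}\right]\xi_{t|t}^c + c$ with $c = -q^\top(A-\alpha I)\bar{x}_{t|t} - q^\top Bu_t - (1-\alpha)r$. The key observation is that $c$ is a genuine constant with respect to the distributions in $\mathcal{P}_{t|t}^c$ — in particular $u_t$ is a decision variable, not random — so translation invariance lets me pull $c$ out of the worst-case CVaR.

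Finally, imposing the bound $\leq 0$ on the resulting expression and rearranging to isolate the input term yields exactly \eqref{eq:hs} with $\phi(\hat{x}_{t|t})$ as stated. I expect no genuine obstacle; the only care needed is to correctly center $\xi_{t|t}$, to verify that the $w_t$ block has zero mean so that only the $\hat{x}_{t|t}$ part shifts, and to confirm that each collected term is deterministic so that translation invariance legitimately applies. Because every step is either an identity or an equivalent rearrangement of a single scalar inequality, both directions of the biconditional hold at once.
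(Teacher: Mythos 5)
Your proposal is correct and follows essentially the same route as the paper's proof: substitute \eqref{eq:x_hat} into \eqref{eq:or_cbf}, center $\xi_{t|t}$ around $[\bar{x}_{t|t}^\top\ 0]^\top$, pull the deterministic terms (including the $u_t$ term) out of the worst-case CVaR via translation invariance, and rearrange, with every step an equivalence so the biconditional is immediate. Your write-up is in fact slightly cleaner than the paper's, which implicitly uses translation invariance and contains a cosmetic slip in its intermediate expression (grouping $w_t$ under the $(\alpha I - A)$ coefficient) that disappears in the final formula.
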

\begin{proof}
To express the constraint in terms of the control input $u_t$, 
substitute \eqref{eq:x_hat} into the constraint \eqref{eq:or_cbf}:
\begin{align}\begin{aligned}
& \sup_{\mathbb{P}\in \mathcal{P}_{t|t}^c}\mathbb{P}\text{-CVaR}_{\varepsilon}[-q^\top \left((A-\alpha I)(\hat{x}_{t|t} -\bar{x}_{t|t}+\bar{x}_{t|t} )+w_t\right)]\\
&\quad - q^\top Bu_t  -r +\alpha r \\
=& \sup_{\mathbb{P}\in \mathcal{P}_{t|t}^c}\mathbb{P}\text{-CVaR}_{\varepsilon}[-q^\top\left( (A-\alpha I)(\hat{x}_{t|t} -\bar{x}_{t|t}) + w_t\right)]\\
&\quad - q^\top Bu_t-q^\top (A-\alpha I ) \bar{x}_{t|t} - (1-\alpha)r \leq 0.
\end{aligned}\end{align}
This is equivalent to
\begin{align}\begin{aligned}
- q^\top Bu_t
&\leq -\sup_{\mathbb{P}\in \mathcal{P}_{t|t}^c}\mathbb{P}\text{-CVaR}_{\varepsilon}\left[-{q}^\top  \left[\begin{matrix}A-\alpha I& I \end{matrix}\right] \xi_{t|t}^c\right]  \\
 & +q^\top (A-\alpha I)\bar{x}_{t|t}+(1-\alpha)r= \phi(\hat{x}_{t|t}).
\end{aligned}\end{align}
\end{proof}

\subsection{Ellipsoidal safe set}\label{sec:ell}
Next, we consider a scenario with an ellipsoidal safe set, defined by a positive definite matrix $E \in \mathbb{S}^n_+$ and a vector $x_c \in \mathbb{R}^n$:
\begin{align}\begin{aligned} \label{eq:ell-set}
\mathcal{C}_{\text{ell}} 
&=\{x \in  \mathbb{R}^n: h(x) = - (x-x_c)^\top E (x-x_c)+ r \geq 0 \}.
\end{aligned}\end{align}
In this case, the constraint \eqref{eq:or_cbf} can be expressed as follows:
\begin{prop}\label{prop:ell}
If $h(x) = - (x-x_c)^\top E (x-x_c)+ r$, $E \in \mathbb{S}^n_+$,  $x_c \in \mathbb{R}^n$ and $r\in\mathbb{R}$, then the constraint \eqref{eq:or_cbf} is satisfied if and only if
\begin{align}
\sup_{\mathbb{P}\in \mathcal{P}_{t|t}^c}\mathbb{P}\text{-CVaR}_{\varepsilon}\left[ ( \xi_{t|t}^c)^\top \bar{P}  \xi_{t|t}^c +2 \bar{q}^\top \xi_{t|t}^c +\bar{r} \right] \leq  0, \label{eq:ell}
\end{align}
where $\bar{P}$, $\bar{q}$, and $\bar{r}$ are defined as follows:
\begin{align}\begin{aligned}
\bar{P} & = \left[\begin{matrix}A^\top E A -\alpha E & A^\top E \\ EA & E \end{matrix}\right],\\
 \bar{q}& =  \bar{q}_1+ \bar{q}_2 ,\\ 
  \bar{q}_1& =  \left[\begin{matrix} A^\top E(A \bar{x}_{t|t} -x_c)-\alpha E(x-x_c)\\ E(A \bar{x}_{t|t} -x_c) \end{matrix}\right], \\
  \bar{q}_2 &=  \left[\begin{matrix} A^\top EBu_t\\ EBu_t \end{matrix}\right], \\
 \bar{r}& =(A \bar{x}_{t|t} +Bu_t-x_c) ^\top E(A \bar{x}_{t|t} +Bu_t-x_c) \\
 &-\alpha (\bar{x}_{t|t} -x_c)^\top E (\bar{x}_{t|t} -x_c) -(1-\alpha)r.
\end{aligned}\end{align}
\end{prop}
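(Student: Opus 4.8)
The plan is to mirror the strategy used in Theorem~\ref{thm:hs}: substitute the one-step Kalman propagation \eqref{eq:x_hat} into the loss appearing in \eqref{eq:or_cbf} and rewrite the resulting expression as a single quadratic form in the centered variable $\xi_{t|t}^c$. Unlike the half-space case, $h$ is now quadratic, so the substitution produces genuine quadratic, linear, and constant pieces that must be matched against $\bar{P}$, $\bar{q}$, and $\bar{r}$.

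First I would write out the loss explicitly. Using $h(x)=-(x-x_c)^\top E (x-x_c)+r$, the argument of the worst-case CVaR in \eqref{eq:or_cbf} becomes
\begin{align*}
-h(\hat{x}_{t+1|t})+\alpha h(\hat{x}_{t|t})
&=(\hat{x}_{t+1|t}-x_c)^\top E(\hat{x}_{t+1|t}-x_c)\\
&\quad -\alpha(\hat{x}_{t|t}-x_c)^\top E(\hat{x}_{t|t}-x_c)-(1-\alpha)r.
\end{align*}
The central step is then to isolate the deterministic mean from the randomness. Writing $\hat{x}_{t|t}=\bar{x}_{t|t}+(\hat{x}_{t|t}-\bar{x}_{t|t})$ and substituting \eqref{eq:x_hat}, I would use the identities
\begin{align*}
\hat{x}_{t+1|t}-x_c&=(A\bar{x}_{t|t}+Bu_t-x_c)+\left[\begin{matrix}A& I\end{matrix}\right]\xi_{t|t}^c,\\
\hat{x}_{t|t}-x_c&=(\bar{x}_{t|t}-x_c)+\left[\begin{matrix}I& 0\end{matrix}\right]\xi_{t|t}^c,
\end{align*}
which follow directly from the definition $\xi_{t|t}^c=[\,(\hat{x}_{t|t}-\bar{x}_{t|t})^\top\ w_t^\top\,]^\top$.

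Next I would expand each quadratic term and collect. The quadratic-in-$\xi_{t|t}^c$ part gives $\left[\begin{matrix}A& I\end{matrix}\right]^\top E \left[\begin{matrix}A& I\end{matrix}\right]-\alpha\left[\begin{matrix}I& 0\end{matrix}\right]^\top E\left[\begin{matrix}I& 0\end{matrix}\right]$, whose block form is exactly $\bar{P}$. The linear coefficient equals $\left[\begin{matrix}A& I\end{matrix}\right]^\top E(A\bar{x}_{t|t}+Bu_t-x_c)-\alpha\left[\begin{matrix}I& 0\end{matrix}\right]^\top E(\bar{x}_{t|t}-x_c)$; splitting $A\bar{x}_{t|t}+Bu_t-x_c$ into its $u_t$-free part and its $Bu_t$ part yields precisely $\bar{q}_1$ and $\bar{q}_2$. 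The purely deterministic remainder collects into $\bar{r}$. Since this rewriting is an identity between the two loss functions as functions of the random vector $\xi_{t|t}^c$ (both states are affine in $\xi_{t|t}^c$ given $\bar{x}_{t|t}$ and $u_t$), the two expressions agree pointwise, so their worst-case CVaR values coincide and \eqref{eq:or_cbf} holds if and only if \eqref{eq:ell} holds; no saddle-point or monotonicity argument is needed.

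The only real obstacle is the bookkeeping: keeping the cross terms organized and correctly separating the control-dependent contributions (which land in $\bar{q}_2$ and in the $Bu_t$ terms of $\bar{r}$) from the control-free ones (in $\bar{q}_1$ and the remaining part of $\bar{r}$), while confirming that the block partition matches the stated $2\times 2$ structure of $\bar{P}$ and $\bar{q}$.
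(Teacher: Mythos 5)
Your proof is correct and is precisely the argument the paper intends: the paper gives no explicit proof of Proposition~\ref{prop:ell}, treating it as the same substitute-and-expand computation used for Theorem~\ref{thm:hs}, and your affine rewriting of $\hat{x}_{t+1|t}-x_c$ and $\hat{x}_{t|t}-x_c$ in terms of $\xi_{t|t}^c$ followed by coefficient matching is exactly that computation, with the pointwise identity of the two loss functions correctly justifying the ``if and only if.'' As a minor bonus, your expansion produces $-\alpha E(\bar{x}_{t|t}-x_c)$ in $\bar{q}_1$, which confirms that the ``$x$'' appearing in the paper's stated $\bar{q}_1$ is a typo for $\bar{x}_{t|t}$.
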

Although checking the satisfaction of \eqref{eq:ell} for a given $u_t$ is straightforward, characterizing the exact set of $u_t$ that satisfies \eqref{eq:ell} is not except for some simple cases. 

However, a sufficient condition for the constraint \eqref{eq:or_cbf} to be satisfied can be expressed using a quadratic function of the control input $u$ as follows:
\begin{thm}\label{thm:ell}
Let  $h(x) = - (x-x_c)^\top E (x-x_c)+ r$ with $E \in \mathbb{S}^n_+$, $x_c \in \mathbb{R}^n$, and $r\in\mathbb{R}$. Define $\bar{u}_t  = [u_t ^\top, v_t ^\top]^\top$ as a combination of the control input $u_t \in \mathbb{R}^m$ and an auxiliary variable $v_t \in \mathbb{R}^m$. Then, the constraint \eqref{eq:or_cbf} holds if:
\begin{align}\begin{aligned}  \label{eq:ell-const}
 \bar{u}_t^\top \tilde{P}\bar{u}_t + 2\tilde{q}^\top(x) \bar{u}_t+ \tilde{r}(x)\leq 0 \text{ and } \
\tilde{A}\bar{u}_t \leq 0,
\end{aligned}\end{align}
where
\begin{align}\begin{aligned} \label{eq:ell-const2}
\tilde{P} & = \left[ \begin{matrix}B^\top E B& 0\\ 0 & 0 \end{matrix}\right], \\
\tilde{q}(x) & =\left[ \begin{matrix} B^\top E( A\bar{x}_{t|t}-x_c) \\ \sup_{\mathbb{P}\in \mathcal{P}_{t|t}}\mathbb{P}\text{-CVaR}_{\varepsilon}\left[ \left[\begin{matrix} A^\top EB\\ EB \end{matrix}\right]^\top \xi_{t|t}^c  \right] \end{matrix}\right],\\
\tilde{r}(x) & =\sup_{\mathbb{P}\in \mathcal{P}_{t|t}}\mathbb{P}\text{-CVaR}_{\varepsilon}\left[  (\xi_{t|t}^c)^\top \bar{P}  \xi_{t|t}^c +2\bar{q}_1^\top  \xi_{t|t}^c  \right]  \\
&  + (A \bar{x}_{t|t} -x_c) ^\top E(A \bar{x}_{t|t} -x_c) \\
 &-\alpha (\bar{x}_{t|t} -x_c)^\top E (\bar{x}_{t|t} -x_c) -(1-\alpha),\\
\tilde{A}& = \left[ \begin{matrix}  I& -I\\ - I & -I \end{matrix}\right]. 
\end{aligned}\end{align}
\end{thm}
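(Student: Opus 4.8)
The plan is to obtain \eqref{eq:ell-const} as a \emph{sufficient} condition by starting from the exact reformulation \eqref{eq:ell} of Proposition \ref{prop:ell} and conservatively bounding its left-hand side through the coherence axioms of Proposition \ref{prop:coh}. Writing $L$ for the loss argument in \eqref{eq:ell}, it decomposes by its dependence on the input into a $u_t$-independent random part $(\xi_{t|t}^c)^\top\bar P\xi_{t|t}^c+2\bar q_1^\top\xi_{t|t}^c$, a $u_t$-dependent random (linear) part $2\bar q_2^\top\xi_{t|t}^c$, and the deterministic remainder $\bar r$ (which contains $u_t$ only through $Bu_t$). First I would use translation invariance to extract $\bar r$ and sub-additivity to separate the two random parts, giving
\[
\sup_{\mathbb{P}\in\mathcal{P}_{t|t}^c}\mathbb{P}\text{-CVaR}_{\varepsilon}[L]\le \sup_{\mathbb{P}}\mathbb{P}\text{-CVaR}_{\varepsilon}\big[(\xi_{t|t}^c)^\top\bar P\xi_{t|t}^c+2\bar q_1^\top\xi_{t|t}^c\big]+\sup_{\mathbb{P}}\mathbb{P}\text{-CVaR}_{\varepsilon}[2\bar q_2^\top\xi_{t|t}^c]+\bar r,
\]
so any $u_t$ driving this majorant to be non-positive certifies \eqref{eq:ell}, hence \eqref{eq:or_cbf}.

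The crux is linearizing the middle term in $u_t$. Writing $\bar q_2=Gu_t$ with $G=\big[\begin{smallmatrix}A^\top EB\\ EB\end{smallmatrix}\big]$ turns it into $2\bar q_2^\top\xi_{t|t}^c=2u_t^\top(G^\top\xi_{t|t}^c)$, a linear functional of the \emph{zero-mean} random vector $G^\top\xi_{t|t}^c$ whose coefficient is the decision variable. Since $\xi_{t|t}^c$ is centered, Lemma \ref{lem:CVaR_bd} applies and yields $\sup_{\mathbb{P}}\mathbb{P}\text{-CVaR}_{\varepsilon}[2u_t^\top G^\top\xi_{t|t}^c]\le 2|u_t|^\top\sup_{\mathbb{P}}\mathbb{P}\text{-CVaR}_{\varepsilon}[G^\top\xi_{t|t}^c]$, replacing the intractable coupling of $u_t$ inside the CVaR by the element-wise CVaR vector that forms the lower block of $\tilde q(x)$. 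To render $|u_t|$ affine I introduce the auxiliary variable $v_t$ with the element-wise bound $v_t\ge|u_t|$, which is encoded exactly by $\tilde A\bar u_t\le 0$ because $\tilde A\bar u_t$ stacks $u_t-v_t$ and $-u_t-v_t$. The key sign observation is that every component of $\sup_{\mathbb{P}}\mathbb{P}\text{-CVaR}_{\varepsilon}[G^\top\xi_{t|t}^c]$ is non-negative, since the worst-case CVaR is no smaller than the mean and each component of $G^\top\xi_{t|t}^c$ has zero mean. Hence $|u_t|\le v_t$ gives $2|u_t|^\top\mathrm{CVaR}[\cdot]\le 2v_t^\top\mathrm{CVaR}[\cdot]$, matching the lower block of $2\tilde q^\top(x)\bar u_t$.

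It then remains to expand the deterministic remainder $\bar r=(A\bar x_{t|t}+Bu_t-x_c)^\top E(A\bar x_{t|t}+Bu_t-x_c)-\alpha(\bar x_{t|t}-x_c)^\top E(\bar x_{t|t}-x_c)-(1-\alpha)r$, routing the pure quadratic $u_t^\top B^\top EBu_t$ into $\bar u_t^\top\tilde P\bar u_t$ (the $v_t$-blocks of $\tilde P$ vanish), the cross term $2u_t^\top B^\top E(A\bar x_{t|t}-x_c)$ into the upper block of $2\tilde q^\top(x)\bar u_t$, and the remaining input-free constants together with the $u_t$-independent CVaR term into $\tilde r(x)$. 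Collecting the three bounded pieces produces $\sup_{\mathbb{P}}\mathbb{P}\text{-CVaR}_{\varepsilon}[L]\le\bar u_t^\top\tilde P\bar u_t+2\tilde q^\top(x)\bar u_t+\tilde r(x)$ under $\tilde A\bar u_t\le0$, so \eqref{eq:ell-const} implies \eqref{eq:ell} and therefore \eqref{eq:or_cbf}. I expect the main obstacle to be precisely the $v_t$ step: one must verify the non-negativity of the element-wise worst-case CVaR so that substituting $v_t$ for $|u_t|$ is a valid conservative relaxation rather than an uncontrolled one; the rest is algebraic bookkeeping organized by Proposition \ref{prop:ell} and the coherence properties.
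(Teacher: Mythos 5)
Your proposal is correct and follows essentially the same route as the paper's proof: split the loss via the coherence properties (Proposition \ref{prop:coh}) into the $u_t$-independent random part, the linear $u_t$-dependent part, and $\bar{r}$; bound the linear part with Lemma \ref{lem:CVaR_bd}; then introduce $v_t$ with $\tilde{A}\bar{u}_t \leq 0$ and expand $\bar{r}$ to assemble $\tilde{P}$, $\tilde{q}$, $\tilde{r}$. In fact you make explicit a step the paper leaves implicit --- that the element-wise worst-case CVaR of the zero-mean vector $G^\top \xi_{t|t}^c$ is non-negative, which is what makes replacing $|u_t|$ by $v_t$ a valid relaxation --- so your write-up is, if anything, more complete.
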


\begin{proof}
Applying Proposition \ref{prop:coh}:
\begin{align}\begin{aligned}
&\sup_{\mathbb{P}\in \mathcal{P}_{t|t}}\mathbb{P}\text{-CVaR}_{\varepsilon}\left[  (\xi_{t|t}^c)^\top \bar{P}  \xi_{t|t}^c +2 \bar{q}^\top \xi_{t|t}^c+\bar{r} \right] \\
\leq &\sup_{\mathbb{P}\in \mathcal{P}_{t|t}}\mathbb{P}\text{-CVaR}_{\varepsilon}\left[  (\xi_{t|t}^c)^\top \bar{P}  \xi_{t|t}^c +2\bar{q}_1^\top  \xi_{t|t}^c  \right] \\
&\qquad +2\sup_{\mathbb{P}\in \mathcal{P}_{t|t}}\mathbb{P}\text{-CVaR}_{\varepsilon}\left[ \bar{q}_2^\top  \xi_{t|t}^c  \right]  +\bar{r}.
\end{aligned}\end{align}
Further, Lemma \ref{lem:CVaR_bd} implies:
\begin{align}\begin{aligned}
&\sup_{\mathbb{P}\in \mathcal{P}_{t|t}}\mathbb{P}\text{-CVaR}_{\varepsilon}\left[ \bar{q}_2^\top  \xi_{t|t}^c  \right]  \\
\leq &\left|u_t\right|^\top \sup_{\mathbb{P}\in \mathcal{P}_{t|t}}\mathbb{P}\text{-CVaR}_{\varepsilon}\left[ \left[\begin{matrix} A^\top EB\\ EB \end{matrix}\right]^\top \xi_{t|t}^c  \right]. 
\end{aligned}\end{align}
Thus, a sufficient condition for constraint satisfaction is:
\begin{align}\begin{aligned}
&\sup_{\mathbb{P}\in \mathcal{P}_{t|t}}\mathbb{P}\text{-CVaR}_{\varepsilon}\left[  (\xi_{t|t}^c)^\top \bar{P}  \xi_{t|t}^c +2\bar{q}_1^\top  \xi_{t|t}^c  \right] \\
& +2\left|u_t\right|^\top \sup_{\mathbb{P}\in \mathcal{P}_{t|t}}\mathbb{P}\text{-CVaR}_{\varepsilon}\left[ \left[\begin{matrix} A^\top EB\\ EB \end{matrix}\right]^\top \xi_{t|t}^c  \right] +\bar{r}\leq 0.
\end{aligned}\end{align}
Introducing $v_t \geq 0$ such that $-v_t \leq u_t \leq v_t$ and completing squares leads to the conditions in \eqref{eq:ell-const}-\eqref{eq:ell-const2}.
\end{proof}

\section{Controller Synthesis} \label{sec:syn}
This section, the final of the three main sections, presents the synthesis of controllers utilizing results from previous sections, and discusses the gain $K_{t+1}$ in our approach.

\subsection{The gain $K_{t+1}$} \label{sec:Kgain}
This subsection consider the role of the gain $K_{t+1}$ in relation to the control input $u_t$.
Basically, the control input should satisfy the constraint \eqref{eq:or_cbf}.
By looking back Lemma \ref{lem:CVaR_quadratic}, this left-hand-side is minimized when $\Omega$, or $\Sigma$ is small.  
Consequently, we employ the gain $K_{t+1}$, known as the Kalman gain, which minimizes the error covariance ${P}_{t+1|t+1}$, which corresponds to $\Sigma$. 
The Kalman gain is given by:
\begin{align}\label{eq:Kgain}
K_{t+1}= P_{t+1|t}H^\top S^{-1}_{t+1}.
\end{align}
With this, the error covariance of updated state estimate is 
\begin{align}\label{eq:KgainP}
 {P}_{t+1|t+1}  =(I-K_{t+1}H){P}_{t+1|t}. 
\end{align}
Together with Section \ref{sec:filter}, the Kalman filter formula is completed.

\subsection{Method 1: Modifying a nominal controller}

One approach to designing a controller using \eqref{eq:or_cbf} is to adapt a nominal controller $u_{\text{nom}}(x)$, which does not account for safety constraints, to comply with derived safety conditions by
minimally modifying it so as to ensure the safety  \cite{CosCT23},  \cite{AmeCE19}.
The control input at $t$ can be computed by 
\begin{align} \label{eq:controller}
\begin{aligned} 
u^*(x) &=  \text{argmin}_{u}  \|u-u_{\text{nom}}(x)\|^2\\
\text{s.t. }\ & \text{CBF constraint \eqref{eq:or_cbf} e.g., \eqref{eq:hs}, and \eqref{eq:ell-const}}.
\end{aligned}
\end{align}

This method prioritizes safety while minimizing deviation from the nominal controller. 
If infeasible, a modified optimization problem incorporating a penalty parameter $\rho > 0$ to balance safety and controller deviation can be used. 
In case of the ellipsoidal safe set, for example, the constraint \eqref{eq:ell-const} is revised:
\begin{align} \label{eq:ell_c_rev}
\begin{aligned} 
u ^*(x) &=  \text{argmin}_{u}  \|u-u_{\text{nom}}(x)\|^2 +\rho \delta\\
\text{s.t. }\ & 
 \bar{u}_t^\top \tilde{P}\bar{u}_t + 2\tilde{q}^\top(x) \bar{u}_t+ \tilde{r}(x)\leq \delta \text{ and } 
\tilde{A}\bar{u}_t \leq 0.
\end{aligned}
\end{align}

\subsection{Method 2: CLF-CBF-based optimization}
Another approach is to combine Control Lyapunov Functions (CLF) and CBF to obtain the control inputs \cite{AmeP13,AmeXG17}. 

Let us first define the CLF:
\begin{defi}[Control Lyapunov Function \cite{AgrS17}]
 A map $V :  \mathbb{R}^n \rightarrow \mathbb{R}$ is an exponential control Lyapunov function for the 
system $x_{t+1} = f(x_t, u_t)$ if there exists:
\begin{itemize}
\item positive constants $c_1$ and $c_2$ such that
$c_1\|x_t\|^2 \leq V(x_t)\leq c_2\|x_t\|^2$, and
\item  a control input $u_t: \mathbb{R}^m \rightarrow \mathbb{R}$, $\forall x_t \in \mathbb{R}^n$ and $c_3>0$ such
that $V(x_{t+1})-V(x_t) +c_3 \|x_t\|^2 \leq 0$.
\end{itemize}
\end{defi}
We consider aiming at stabilizing the estimated mean states $\bar{x}_{t|t}$, $\bar{x}_{t+1|t}$ by choosing an appropriate $V$ and forcing the condition $V(\bar{x}_{t+1|t})-V(\bar{x}_{t|t}) +c_3 \|\bar{x}_{t|t}\|^2 \leq 0$. 

Choose 
\begin{align}
V(x_t) = x_t^\top \Phi x_t
\end{align}
for some $\Phi \in \mathbb{S}^n_+$.
Choosing $c_1 = \sigma_{\min}$ and $c_2 = \sigma_{\max}$, where $\sigma_{\min}$ and $\sigma_{\max}$ are the smallest and largest singular values of $\Phi$, respectively, satisfies the first condition of CLF. 

For the second condition to be satisfied, $u_t$ must exist such that satisfies the quadratic constraint.
 \begin{align} \begin{aligned}
&V(\bar{x}_{t+1|t})-V(\bar{x}_{t|t}) +c_3 \|\bar{x}_{t|t}\|^2\\
 = &(A\bar{x}_{t|t}+Bu_t)^\top \Phi (A\bar{x}_{t|t}+Bu_t)-\bar{x}_{t|t}^\top \Phi \bar{x}_{t|t} +c_3 \|\bar{x}_{t|t}\|^2 \\
 = &u_t ^\top B^\top \Phi Bu_t  +2(A\bar{x}_{t|t})^\top \Phi Bu_t \\
 &\qquad+ \bar{x}_{t|t}^\top (A^\top \Phi A -\Phi +c_3 I) \bar{x}_{t|t} \leq 0. \label{eq:clf_constraint}
\end{aligned}\end{align}

Combining this with the CBF constraint,  at each time, the control input can be computed by
\begin{align} \label{eq:CLF-CBF}
\begin{aligned}
 \mathbf{v} ^*(x) &= \argmin_{\mathbf{v}= [u_t, \delta]^\top \in\mathbb{R}^{m+1}} \frac{1}{2}\mathbf{v}^\top \Theta \mathbf{v} + \eta^\top\mathbf{v}\\
\text{s.t. }\ & 
u_t ^\top B^\top \Phi Bu_t  +2(A\bar{x}_{t|t})^\top \Phi Bu_t \\
 &\qquad+ \bar{x}_{t|t}^\top (A^\top \Phi A -\Phi +c_3 I) \bar{x}_{t|t} \leq  \delta,\\
 & \text{ CBF constraint \eqref{eq:or_cbf} e.g., \eqref{eq:hs}, and \eqref{eq:ell-const}}
\end{aligned}\end{align}
where a positive definite matrix $ \Theta \in \mathbb{S}^{m+1}_+$ and a positive vector $\eta  \in  \mathbb{R}^{m+1}$ are weights and 
$\delta$ is a relaxation variable that ensures the solvability of the optimization problem by relaxing the constraint \eqref{eq:clf_constraint}.

\section{Numerical Examples}\label{sec:ex}
In this section, the effectiveness of the control strategies developed in Section \ref{sec:syn} is demonstrated through numerical simulations, using an example of vehicle navigation. 

Consider the state vector $x_t = \left[\begin{matrix} x_{t,1} & x_{t,2} \end{matrix}\right]^\top$, representing the vehicle's position $x_{t,1}$ and velocity $x_{t,2}$ at time $t$. The control input $u_t$ denotes the commanded acceleration, while the output $z_t$ denotes the measured position at time $t$. With the sampling time $0.05$, the system's dynamics are modeled as follows:
\begin{align}
\begin{aligned}
x_{t+1} &=\left[\begin{matrix} 1 & 0.05\\ 0 & 1 \end{matrix}\right]x_t + \left[\begin{matrix} 0.0125 \\ 0.05 \end{matrix}\right] u_t + w_t, \\ 
z_t &= \left[\begin{matrix} 1 & 0 \end{matrix}\right]x_t + v_t.
\end{aligned}
\end{align}
The disturbance and noise covariances are:
\begin{align}
\begin{aligned}
Q &= \left[\begin{matrix}7.66 \times 10^{-5} & 3.06 \times 10^{-3} \\ 3.06\times 10^{-3} & 1.23\times 10^{-1} \end{matrix}\right],\\
R &= 0.09.
\end{aligned}
\end{align}

Methods 1 and 2 are illustrated for a half-space safe set:
\begin{align}
\mathcal{C} = \{x\in \mathbb{R}^2: h(x) = \left[\begin{matrix} 0.4 & 0.4 \end{matrix}\right] x + 1 \geq 0\}.
\end{align}
The design parameters $\varepsilon = 0.3$ and $\alpha = 0.7$, and the initial conditions $\bar{x}_0 = \left[\begin{matrix} 7 & 0 \end{matrix}\right]^\top$ and $P_0 = Q$ are used.
The nominal controller used in Method 1 is
\begin{align}
u(x) = - 15x_1 - 5x_2,  \label{eq:nom_m1}
\end{align}
and the parameters used in Method 2 are 
\begin{align}\begin{aligned}
\Psi &= \left[\begin{matrix} 100 & 0 \\ 0 & 1 \end{matrix}\right]\!, 
\Theta = \left[\begin{matrix} 10 & 0 \\ 0 & 0.1 \end{matrix}\right]\!, 
\eta = \left[\begin{matrix} 0 & 100 \end{matrix}\right]^\top\!\!\!\!, 
c_3 = 10.
\end{aligned}\end{align}

For the duration of time 4, the performances of the proposed controllers (Proposed) are compared with the performances of the controllers that
\begin{itemize}
\item disregard safety constraints assuming no disturbances (Ignore constraint ($w=0$)), and
\item  use the expected value instead of the worst-case CVaR (Expected value-based).
\end{itemize}
These comparisons are illustrated in Figs. \ref{fig:1} and \ref{fig:2} for Methods 1 and 2, respectively.
The trajectories of the Kalman filter's estimated state mean discussed in \ref{sec:filter} are also included.
 
We observe in both figures that ignoring safety constraints leads to trajectories entering unsafe regions even without disturbances. 
This underscores the importance of employing CBFs to maintain safety.
In Fig. \ref{fig:1}, this is the trajectory resulting from using the controller \eqref{eq:nom_m1}. 

The Kalman filter's estimation, which are used to obtain the control inputs, align closely with true dynamics for both the proposed and expected value-based controllers. 

On the other hand, the most part of the trajectories of expected value-based controller enters the unsafe region while resulting that of the proposed controller remain in the safe region regardless of the stochastic disturbance. This shows the use of expected value is not sufficient to remain in the safe region.
Moreover, in these cases, we see the trajectories also successfully approach to the origin as desired.


\begin{figure}[h]
\centering
\begin{minipage}[b]{0.61\columnwidth}
    \centering
    \includegraphics[width=\linewidth, viewport=1 4 418 308, clip]{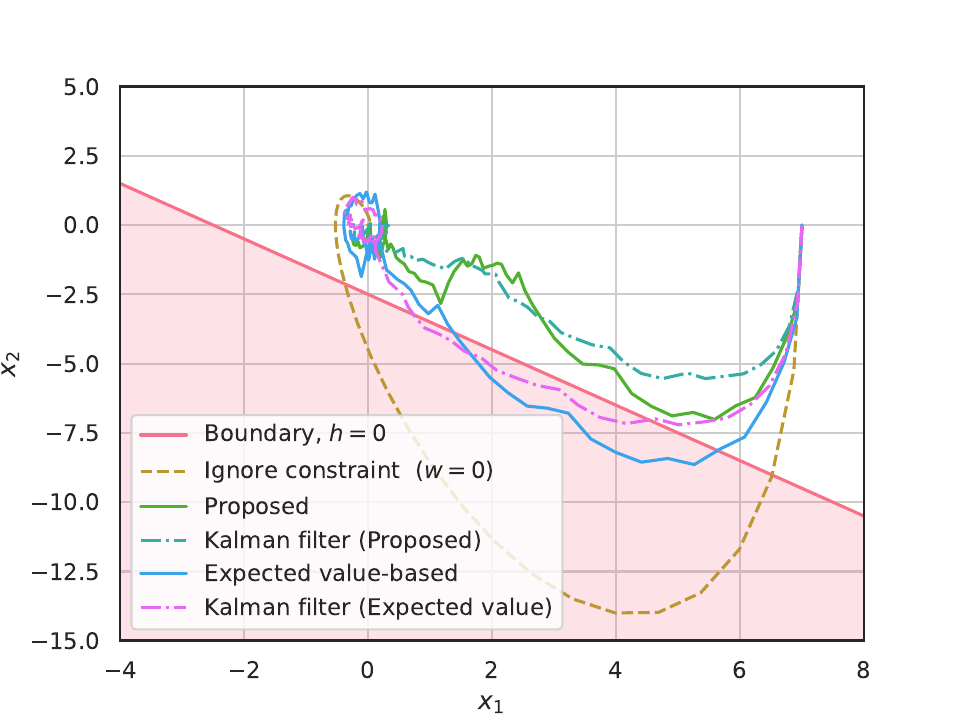}  \vspace{-.25in}
    \subcaption{Method 1}\label{fig:1}
\end{minipage}
\begin{minipage}[b]{0.61\columnwidth}
    \centering
   \includegraphics[width=\linewidth, viewport=1 4 418 308, clip]{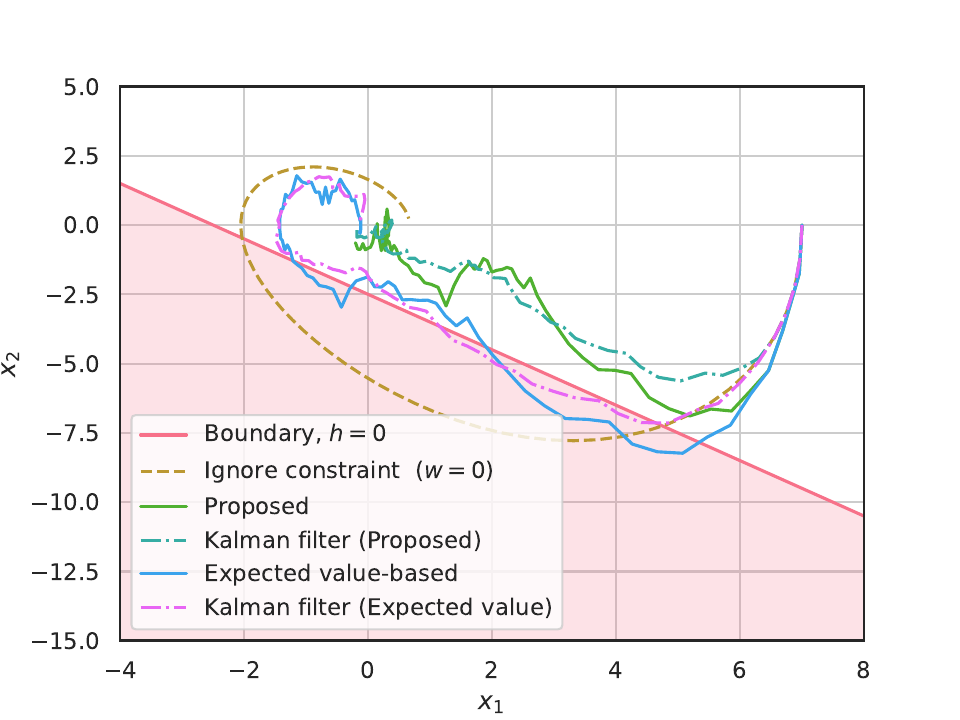}  \vspace{-.25in}
    \subcaption{Method 2}\label{fig:2}
\end{minipage}
  \caption{Phase portrait for vehicle navigation: the red regions indicate the unsafe region $h(x)<0$}\label{fig:pp} \vspace{-.24in}
\end{figure}

\section{Conclusions} \label{sec:conc}

This paper developed control approaches for discrete-time linear stochastic systems with partially observable states. Key contributions include:
\begin{itemize}
\item Employment of Kalman filter with the worst-case CVaR for effective incorporation of the tail risk into CBFs
\item Development of risk-aware constraints for the controller synthesis that effectively manage tail risks at the boundaries of safe sets
\item Integration of risk-aware constraints into two control methods: modification of nominal controllers and CLF-CBF-based optimization
\item Demonstration of improved safety and reliability through numerical examples of vehicle navigation
\end{itemize}

The proposed approaches contribute to the field of risk-aware control, offering a foundation for future research in enhancing safety measures.
Future research directions include investigating the applicability of other state observers within this framework to potentially broaden the approach's versatility, as well as expanding these techniques to nonlinear systems, thereby enhancing their scope and impact.


\bibliographystyle{IEEEtran}
\bibliography{IEEEabrv,myref}

\end{document}